\newtheorem{theorem}{Theorem}[section]
\newtheorem{lemma}[theorem]{Lemma}
\newtheorem{remark}[theorem]{Remark}
\newtheorem{question}[theorem]{Question}
\newcommand{\R}{\mathbb R}%
\newcommand{\N}{\mathbb N}%
\newcommand{\LL}{\mathcal L}
\newcommand{\G}{\mathbb G}%
\newcommand{\dd}{{\bf{d}}}
\newcommand{\ddd}{\mathfrak{d}}
\numberwithin{equation}{section}
\begin{document}
\title[Heat equation on Stratified Lie groups]{Initial singularities of positive solutions of the Heat equation on Stratified Lie groups}
\author[Utsav Dewan]{Utsav Dewan}
\address{Stat-Math Unit, Indian Statistical Institute, 203 B. T. Rd., Kolkata 700108, India}
\email{utsav\_r@isical.ac.in ;\: utsav97dewan@gmail.com }
\subjclass[2020]{Primary 43A80, 35R03; Secondary 28A78, 35K05} 
\keywords{Stratified Lie groups, Heat equation on Carnot groups, Hausdorff dimension}

\begin{abstract} 
Let $(\mathbb{G},\circ)$ be a stratified Lie group. We estimate the Hausdorff dimension (with respect to the Carnot-Carath\'eodory metric) of the singular sets in $\mathbb{G}$, where a positive solution of the Heat equation corresponding to a sub-Laplacian, blows up faster than a prescribed rate along normal limits, in terms of the homogeneous dimension of $\mathbb{G}$ and the rate of the blowup parameter. This generalizes a classical result of Watson for the Euclidean Heat. We also obtain the corresponding sharpness result, which is new even for $\mathbb{R}^n$.  
\end{abstract}
\maketitle
\setcounter{tocdepth}{1}
\tableofcontents
\section{Introduction}
Let $(\G,\circ)$ be a stratified Lie group (also known as a Carnot group) with a bi-invariant Haar measure $m$ and $\{X_1,\cdots,X_{N_1}\}$ be a collection of left invariant vector fields that form a basis of the first stratification layer of the Lie algebra $\mathfrak{g}$. We consider the Heat equation 
\begin{equation} \label{heat_eqn}
\frac{\partial}{\partial t} u(x,t)=\LL u(x,t)\:,\: (x,t) \in \G \times (0,\infty)\:,
\end{equation}
where $\LL$ is a sub-Laplacian on $\G$ given by,
\begin{equation*}
\LL = \displaystyle\sum_{j=1}^{N_1}  X^2_j\:.
\end{equation*}

Ever since the celebrated paper of Fatou \cite{Fatou} in 1906,  analysts and probability theorists have been deeply fascinated with the boundary behaviour of solutions of Partial Differential Equations (for instance see \cite{Gehring, K, RU, Doob1, Doob2} and the articles citing them). In this direction, Bonfiglioli et. al. obtained an analogue of the classical Fatou-Kato theorem on $\G$ by showing that \cite[Theorem 1.1]{BU}: positive solutions of the Heat equation (\ref{heat_eqn}) admit finite normal limit as $t \to 0+$, at almost every point on the boundary $\G$. 

This gives rise to a natural question: how does a positive solution of the Heat equation behave along normal limits, on the complement of this full measure subset of $\G$. More precisely,
\begin{question} \label{q1}
How fast can a positive solution of the Heat equation grow?
\end{question}
\begin{question} \label{q2}
How large can the `singular set' (in $\G$) be where a positive solution of the Heat equation blows up faster than a prescribed rate?
\end{question}
To address these questions, we first note that a positive solution of the Heat equation $u$ on $\G \times (0,\infty)$, is represented by a unique Radon measure $\mu$ on $\G$, in the following way \cite[Theorem 1.1]{BU} :
\begin{equation} \label{rep_formula}
u(x,t)=\Gamma[\mu](x,t)= \int_{\G} \Gamma(\xi^{-1} \circ x,t)\: d\mu(\xi) \:, 
\end{equation}
such that the integral is well-defined for all $(x,t) \in  \G \times (0,\infty)$, where $\Gamma$ is the Heat kernel corresponding to (\ref{heat_eqn}). Now, for any $x \in \G$ and any $t \in (0,1)$, repeated applications of the pointwise estimates of $\Gamma$ given by (\ref{Gaussian_estimates}) in (\ref{rep_formula}) yield a positive constant $c$ such that
\begin{equation} \label{kernel_general_estimate}
u(x,t) \le \left(c^{\left(\frac{Q}{2}+1\right)}\: u(x,c)\right)\: t^{-\frac{Q}{2}} \:,
\end{equation}
where $Q$ is the homogeneous dimension of $\G$ (for unexplained terminologies, we refer to section $2$).

Then as $0< u(x,c) < +\infty$,
\begin{itemize}
\item The inequality (\ref{kernel_general_estimate}) answers Question \ref{q1}.
\item  Moreover, in order to answer Question \ref{q2}, we are motivated by (\ref{kernel_general_estimate}) to consider the following sets:
\end{itemize}
\begin{eqnarray*}
&&S_\alpha(u) := \{x \in \G \mid \displaystyle\limsup_{t \to 0+} t^{\alpha/2} u(x,t)=+\infty\} \:, \text{ for } 0 \le \alpha < Q\:,\\
&&T_\alpha(u) := \{x \in \G \mid \displaystyle\limsup_{t \to 0+} t^{\alpha/2} u(x,t)>0\} \:, \text{ for } 0 \le \alpha \le Q\:.
\end{eqnarray*}

In this article, we estimate the $\beta$-dimensional outer Hausdorff measure $\mathcal{H}^\beta$ or the Hausdorff dimension $dim_{\mathcal{H}}$ of the above `singular sets', with respect to the Carnot-Carath\'eodory metric corresponding to $\{X_1,\cdots,X_{N_1}\}$: 
\begin{theorem} \label{Hausdorff_bound}
Let $\alpha \in [0,Q]$ and $u$ be a positive solution of the Heat equation on $\G \times (0,\infty)$. Then 
\begin{equation} \label{size_estimates}
\mathcal{H}^{Q-\alpha}\left(S_\alpha(u)\right)=0\:,\text{ and } dim_{\mathcal{H}}\left(T_\alpha(u)\right) \le Q-\alpha\:.
\end{equation}
\end{theorem}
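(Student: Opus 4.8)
The plan is to adapt Watson's strategy from the Euclidean setting to the group $\G$: pass from the solution $u$ to the representing Radon measure $\mu$ of (\ref{rep_formula}), show that the pointwise growth rate $\limsup_{t\to 0+}t^{\alpha/2}u(x,t)$ is controlled by the upper $(Q-\alpha)$-density of $\mu$ at $x$, and then bound the Hausdorff measure (resp.\ dimension) of the set where this density is large by a standard covering estimate for Radon measures on the metric space $(\G,d)$.

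\textbf{Step 1 (growth of $u$ $\Rightarrow$ density of $\mu$).} I first claim that
\[
\limsup_{t\to 0+} t^{\alpha/2}\,u(x,t)\ \le\ C_{\G}\,\limsup_{r\to 0+}\frac{\mu\bigl(B(x,r)\bigr)}{r^{\,Q-\alpha}}\qquad\text{for every }x\in\G,
\]
where $C_{\G}$ depends only on the constants occurring in the Gaussian estimates (\ref{Gaussian_estimates}). Fix $x$, let $M$ be the right-hand $\limsup$, and assume $M<\infty$; then for each $\varepsilon>0$ there is $\rho=\rho(x,\varepsilon)>0$ with $\mu(B(x,r))\le (M+\varepsilon)r^{Q-\alpha}$ for all $0<r\le\rho$. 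Split $\mu=\mu|_{B(x,\rho)}+\mu|_{\G\setminus B(x,\rho)}$ in (\ref{rep_formula}). For the near part, using the Gaussian upper bound $\Gamma(\xi^{-1}\circ x,t)\lesssim t^{-Q/2}e^{-c\,d(x,\xi)^2/t}$ and estimating $\int_{B(x,\rho)}e^{-c\,d(x,\xi)^2/t}\,d\mu(\xi)$ via the distribution function of $d(x,\cdot)$ together with the density bound (a dyadic-annuli decomposition does the same job) gives $\int_{B(x,\rho)}\Gamma(\xi^{-1}\circ x,t)\,d\mu(\xi)\lesssim (M+\varepsilon)\,t^{-\alpha/2}$. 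For the far part, the two-sided bounds (\ref{Gaussian_estimates}) yield constants $C,c'>0$ such that $\Gamma(\xi^{-1}\circ x,t)\le C\,(c/t)^{Q/2}e^{-c'\rho^{2}/t}\,\Gamma(\xi^{-1}\circ x,c)$ whenever $d(x,\xi)\ge\rho$ and $t$ is below a threshold depending only on $c$ and $\G$; therefore $\int_{\G\setminus B(x,\rho)}\Gamma(\xi^{-1}\circ x,t)\,d\mu(\xi)\le C\,(c/t)^{Q/2}e^{-c'\rho^{2}/t}\,u(x,c)$, which is finite because $0<u(x,c)<\infty$ and which, after multiplication by $t^{\alpha/2}$, tends to $0$ as $t\to 0+$. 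Letting $t\to 0+$ and then $\varepsilon\to 0$ proves the claim, and in particular
\[
S_\alpha(u)\subseteq\Bigl\{x\in\G:\ \limsup_{r\to 0+}\tfrac{\mu(B(x,r))}{r^{Q-\alpha}}=+\infty\Bigr\},\qquad T_\alpha(u)\subseteq\Bigl\{x\in\G:\ \limsup_{r\to 0+}\tfrac{\mu(B(x,r))}{r^{Q-\alpha}}>0\Bigr\}.
\]

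\textbf{Step 2 (density estimate for $\mu$).} For $\lambda>0$ let $E_\lambda:=\{x\in\G:\ \limsup_{r\to 0+}\mu(B(x,r))/r^{Q-\alpha}>\lambda\}$. For each bounded open $U\subseteq\G$ I claim there is a slightly larger bounded open $U'\supseteq U$ with $\mathcal{H}^{Q-\alpha}(E_\lambda\cap U)\le 5^{\,Q-\alpha}\lambda^{-1}\mu(U')$: indeed, for every $x\in E_\lambda$ and every $\delta>0$ there is $r\in(0,\delta)$ with $\mu(B(x,r))>\lambda r^{Q-\alpha}$, so these balls form a fine cover of $E_\lambda\cap U$; the $5r$-covering lemma (valid in any metric space) extracts a countable disjoint subfamily $\{B(x_i,r_i)\}$ with $E_\lambda\cap U\subseteq\bigcup_i B(x_i,5r_i)$, whence
\[
\sum_i (5r_i)^{Q-\alpha}\ \le\ \frac{5^{\,Q-\alpha}}{\lambda}\sum_i \mu\bigl(B(x_i,r_i)\bigr)\ \le\ \frac{5^{\,Q-\alpha}}{\lambda}\,\mu(U'),
\]
and letting $\delta\to 0$ gives the bound. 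Since $S_\alpha(u)\cap U\subseteq E_\lambda\cap U$ for \emph{every} $\lambda>0$, letting $\lambda\to\infty$ gives $\mathcal{H}^{Q-\alpha}(S_\alpha(u)\cap U)=0$, and exhausting $\G$ by countably many such $U$ yields $\mathcal{H}^{Q-\alpha}(S_\alpha(u))=0$. On the other hand $T_\alpha(u)\subseteq\bigcup_{n\ge 1}E_{1/n}$, each $E_{1/n}$ has $\sigma$-finite $\mathcal{H}^{Q-\alpha}$-measure by the bound above, hence $dim_{\mathcal{H}}(E_{1/n})\le Q-\alpha$, and therefore $dim_{\mathcal{H}}(T_\alpha(u))\le Q-\alpha$.

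\textbf{Main obstacle.} The technical heart is Step~1, specifically the far part. Since $\mu$ may have infinite total mass (already $u\equiv 1$ corresponds to $\mu$ equal to Haar measure), one has no a priori control on its tail beyond the qualitative finiteness $u(x,c)<\infty$ furnished by (\ref{rep_formula}); the comparison $\Gamma(\cdot,t)\le C(c/t)^{Q/2}e^{-c'\rho^{2}/t}\,\Gamma(\cdot,c)$ on $\{\,d(x,\cdot)\ge\rho\,\}$, valid for $t$ below an explicit threshold, is precisely what converts that qualitative information into the quantitative decay needed, and carrying this out carefully --- along with the routine but necessary verification that the Carnot--Carath\'eodory metric is comparable to a homogeneous norm, so that $\G$ is a doubling metric space and the heat-kernel and covering arguments genuinely apply --- is where the real work lies. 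Step~2 is by contrast a soft measure-theoretic argument relying on nothing beyond the $5r$-covering lemma.
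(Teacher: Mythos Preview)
Your proposal is correct and follows essentially the same route as the paper: the paper's Lemma~\ref{lemma1} and Lemma~\ref{lemma2} together give exactly your Step~1 (far part via the kernel comparison against $\Gamma[\mu](x,c)$, near part via the distribution function of $d(x,\cdot)$, which the paper phrases as an integration-by-parts), and your Step~2 is the same Vitali/$5r$-covering argument, localized to bounded sets, that the paper carries out on the annuli $E_j$. The one minor deviation is the endgame for $T_\alpha(u)$: the paper observes $T_\alpha(u)\subset S_\beta(u)$ for every $\beta<\alpha$ and re-uses the already proved $\mathcal{H}^{Q-\beta}(S_\beta(u))=0$, whereas you stay at the fixed exponent $Q-\alpha$ and use $\sigma$-finiteness of $\mathcal{H}^{Q-\alpha}$ on each $E_{1/n}$; both conclusions are immediate once Step~1 and the covering bound are in hand.
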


\begin{remark}
\begin{itemize}
\item[(i)] Theorem \ref{Hausdorff_bound} generalizes a classical result of Watson for positive solutions of the Heat equation on $\R^n$ \cite[Theorem 6]{W}.
\item[(ii)] The key estimate in the proof of Theorem \ref{Hausdorff_bound} is given by Lemma \ref{lemma2}, which relates small time asymptotics of positive solutions of the Heat equation, with the upper density of its boundary measure. Such an estimate for $\R^n$ was proved in \cite[Theorem 2]{W} and it crucially used the explicit expression of the Euclidean Heat kernel and also involved the Laplace transform.
\item[(iii)] In the absence of the above two ingredients mentioned in $(ii)$, in the generality of Carnot groups, we obtain the estimate in Lemma \ref{lemma2}, only using pointwise estimates of $\Gamma$, given by (\ref{Gaussian_estimates}) and employing soft arguments involving integration-by-parts.
\end{itemize}
\end{remark}
Upon a brief glance at Theorem \ref{Hausdorff_bound}, an inquisitive mind is naturally led to ask whether the size estimates obtained in (\ref{size_estimates}) are indeed sharp. More precisely,
\begin{question} \label{q3}
Given $\alpha \in [0,Q)$ and any $E \subset \G$ with $\mathcal{H}^{Q-\alpha}(E)=0$, does there exist a positive solution of the Heat equation $u$ on $\G \times (0,\infty)$, such that $E \subset S_\alpha(u)$\:?
\end{question}
\begin{question} \label{q4}
Given $\alpha \in [0,Q)$, does there exist a positive solution of the Heat equation $u$ on $\G \times (0,\infty)$, such that $dim_{\mathcal{H}}\left(T_\alpha(u)\right) = Q-\alpha$\:?
\end{question}

These questions for the Heat equation, seem to be unaddressed even in the classical setting of $\R^n$. Our next result answers Questions \ref{q3} and \ref{q4} in the affirmative, in the general setting of a Carnot group:
\begin{theorem} \label{sharp_thm}
Let $\alpha \in [0,Q)$. Then
\begin{itemize}
\item[(i)] For any $E \subset \G$ with $\mathcal{H}^{Q-\alpha}(E)=0$, there exists a positive solution of the Heat equation $u$ on $\G \times (0,\infty)$, such that $E \subset S_\alpha(u)$.
\item[(ii)] There exists a positive solution of the Heat equation $u$ on $\G \times (0,\infty)$, such that $dim_{\mathcal{H}}\left(T_\alpha(u)\right) = Q-\alpha$\:.
\end{itemize} 
\end{theorem}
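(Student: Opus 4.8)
The plan is to realise every solution required by Theorem~\ref{sharp_thm} in the form $u=\Gamma[\mu]$ for a judiciously chosen positive Radon measure $\mu$, and to read off the blow-up from the two-sided estimates (\ref{Gaussian_estimates}). Write $d$ for the Carnot--Carath\'eodory distance, $B(x,r)$ for its open balls, $e$ for the identity of $\G$, and recall $d(e,\xi^{-1}\circ x)=d(\xi,x)$ by left-invariance. Discarding in (\ref{rep_formula}) every $\xi$ with $d(\xi,x)\ge\sqrt t$ and using the lower bound in (\ref{Gaussian_estimates}) produces a constant $c_0>0$ with
\begin{equation*}
\Gamma[\mu](x,t)\ \ge\ c_0\,t^{-Q/2}\,\mu\!\left(B(x,\sqrt t)\right)\qquad\text{for }x\in\G,\ 0<t<1 ,
\end{equation*}
so that $t^{\alpha/2}\Gamma[\mu](x,t)\ge c_0\,\mu(B(x,\sqrt t))\,(\sqrt t)^{-(Q-\alpha)}$. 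The upshot is that it suffices to arrange the upper $(Q-\alpha)$-density $\limsup_{r\to0+}\mu(B(x,r))\,r^{-(Q-\alpha)}$ of $\mu$ to be $+\infty$ at the points we care about; and, by the upper bound in (\ref{Gaussian_estimates}), $\Gamma[\mu]\le C\,t^{-Q/2}\mu(\G)<\infty$ whenever $\mu$ is finite, so any finite nonzero positive $\mu$ delivers through (\ref{rep_formula}) a genuine positive solution (no hypothesis on the support of $\mu$ is needed).

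For part (i), here is the construction I would use. From the definition of $\mathcal{H}^{Q-\alpha}$ and $\mathcal{H}^{Q-\alpha}(E)=0$, for every $k\ge1$ there is a countable cover of $E$ by balls $B_{k,i}=B(x_{k,i},r_{k,i})$ with $r_{k,i}<2^{-k}$ and $\sum_i r_{k,i}^{\,Q-\alpha}<4^{-k}$. Let $\mu_k$ be obtained by spreading the mass $a_{k,i}:=2^{k}r_{k,i}^{\,Q-\alpha}$ uniformly (with respect to $m$) over $B_{k,i}$, and set $\mu:=\sum_k\mu_k$; then $\mu(\G)\le\sum_k 2^{k}4^{-k}<\infty$. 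Given $x\in E$, choose for each $k$ an index $i(k)$ with $x\in B_{k,i(k)}$; then $B_{k,i(k)}\subset B(x,2r_{k,i(k)})$, hence $\mu(B(x,2r_{k,i(k)}))\ge a_{k,i(k)}$, and therefore
\begin{equation*}
\frac{\mu\!\left(B(x,2r_{k,i(k)})\right)}{(2r_{k,i(k)})^{\,Q-\alpha}}\ \ge\ 2^{\,k-Q+\alpha}\ \longrightarrow\ +\infty\qquad(k\to\infty).
\end{equation*}
Plugging $t=(2r_{k,i(k)})^{2}\to0$ into the displayed lower bound gives $\limsup_{t\to0+}t^{\alpha/2}\Gamma[\mu](x,t)=+\infty$, i.e. $x\in S_\alpha(\Gamma[\mu])$; since $x\in E$ was arbitrary, $u:=\Gamma[\mu]$ satisfies $E\subset S_\alpha(u)$.

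For part (ii), Theorem~\ref{Hausdorff_bound} already gives $dim_{\mathcal{H}}(T_\alpha(u))\le Q-\alpha$ for any positive solution $u$, so it is enough to exhibit one $u$ with $dim_{\mathcal{H}}(T_\alpha(u))\ge Q-\alpha$. The geometric fact I would establish first is that $(\G,d)$ carries, for every $\beta\in(0,Q)$, a compact subset of Hausdorff dimension $\beta$: since the dilations $\delta_\lambda$ rescale $d$ by $\lambda$ and $(\G,d)$ is Ahlfors $Q$-regular for $m$, inside $B(e,1)$ one can place $\asymp\lambda^{-Q}$ pairwise well-separated copies $g_j\circ\delta_\lambda\!\left(B(e,1)\right)$, and a Moran-type iteration of such arrangements --- with the number of children and the ratios $\lambda$ allowed to change from level to level --- realises, via the classical dimension formula for self-similar-type constructions in complete metric spaces, every value in $(0,Q)$. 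Picking $\beta_n\uparrow Q-\alpha$ with $0<\beta_n<Q-\alpha$ and compact $F_{\beta_n}$ of dimension $\beta_n$, the set $F:=\bigcup_n F_{\beta_n}$ satisfies $dim_{\mathcal{H}}(F)=Q-\alpha$ and $\mathcal{H}^{Q-\alpha}(F)=0$ (each $F_{\beta_n}$ is $\mathcal{H}^{Q-\alpha}$-null since its dimension is $<Q-\alpha$). Applying part (i) to this $F$ produces a positive solution $u$ with $F\subset S_\alpha(u)\subset T_\alpha(u)$, hence $dim_{\mathcal{H}}(T_\alpha(u))\ge dim_{\mathcal{H}}(F)=Q-\alpha$, and equality holds.

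The analytic core --- turning a density lower bound for $\mu$ into blow-up of $t^{\alpha/2}u$ --- is essentially immediate from (\ref{Gaussian_estimates}), and part (i) is what drives the whole theorem. The step that will need genuine care is the geometric input for part (ii): producing, inside a Carnot group with its Carnot--Carath\'eodory metric, compact sets of every Hausdorff dimension below $Q$. This is where the interplay between the metric and the dilation structure enters, but only in a soft way --- all that is used is the scaling of $\delta_\lambda$ and the Ahlfors $Q$-regularity of $(\G,d)$, which together guarantee that at every scale there are enough uniformly separated sub-balls for the standard Moran/Hutchinson machinery to run.
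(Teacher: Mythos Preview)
Your argument is correct and follows essentially the same route as the paper.  For part~(i), both you and the paper build an $L^1$ density (equivalently, a finite measure) as a weighted sum over covering balls and then convert mass concentration into blow-up of $t^{\alpha/2}u$; the paper does this via its Lemma~\ref{heat_indicator_estimate} (the estimate $\Gamma[\chi_{B(x,\varepsilon)}](x,t)\ge 1/2$ for $t\le c_2^2\varepsilon^2$), while your one-line inequality $\Gamma[\mu](x,t)\ge c_0\,t^{-Q/2}\mu(B(x,\sqrt t))$ is the same computation stripped to its essentials.  For part~(ii) the overall strategy is again identical --- manufacture a set $F$ with $\dim_{\mathcal H}F=Q-\alpha$ and $\mathcal H^{Q-\alpha}(F)=0$ as a countable union of sets of dimensions $\beta_n\uparrow Q-\alpha$, then feed $F$ into part~(i) --- but the source of the building blocks differs: the paper invokes Howroyd's existence theorem (its Lemma~\ref{existence}: any Borel subset of a complete separable metric space with $\mathcal H^s=\infty$ contains a compact subset of dimension exactly $s$), whereas you propose a hands-on Moran construction exploiting the dilations $\delta_\lambda$ and Ahlfors $Q$-regularity.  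Your approach is more self-contained, but as written it is only a sketch; making it rigorous (uniform separation constants across scales, the dimension formula for such limit sets in a non-Riemannian metric) takes some work, while the paper's citation to Howroyd dispatches the step in one line at the cost of a black box.
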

For the sharpness result Theorem \ref{sharp_thm},
\begin{itemize}
\item We first construct the building blocks of the divergence by means of a `uniform divergence' estimate for small time in Lemma \ref{heat_indicator_estimate} and then suitably glue them together to obtain part $(i)$.
\item For part $(ii)$, we make use of the construction made in part $(i)$ and combine it with a non-trivial, abstract result on existence of sets of desired Hausdorff dimensions (see Lemma \ref{existence}).
\end{itemize}

\begin{remark}
Theorem \ref{sharp_thm} is new even for $\R^n$ and yields the sharpness of \cite[Theorem 6]{W}\:. 
\end{remark}

This article is organized as follows. In Section $2$, we recall the required preliminaries. Theorems \ref{Hausdorff_bound} and \ref{sharp_thm} are proved in Sections $3$ and $4$ respectively. 

\section{Preliminaries}
In this section, we recall the required preliminaries and fix our notations.
\subsection{Some notations:}
Throughout, the symbol `c' will denote positive constants whose values may change on each occurrence. The enumerated constants $c_1, c_2, \dots$ will however be fixed throughout. $\N$ will denote the set of positive integers. For non-negative functions $f_1,\:f_2$ we write, $f_1 \lesssim f_2$ if there exists a constant $c \ge 1$, so that
\begin{equation*}
f_1 \le c f_2 \:.
\end{equation*} 
The indicator function of a set $A$ will be denoted by $\chi_A$. 
\subsection{Stratified Lie groups and the Heat kernel:} In this subsection, we recall the basic notions regarding Stratified Lie Groups and can be found in \cite{BLU, Folland}.

A stratified Lie group $(\G,\circ)$ (also known as a Carnot group) is a connected, simply connected nilpotent Lie group whose Lie algebra $\mathfrak{g}$ admits a vector space decomposition
\begin{equation*}
\mathfrak{g}=V_1\oplus V_2\oplus\cdots\oplus V_l,
\end{equation*}
such that
\begin{equation*}
[V_1,V_j]=V_{j+1},\:\:1\leq j<l,\:\:\:\:\:\:\:\:[V_1,V_l]=0.
\end{equation*}
Here,
\begin{equation*}
[V_1,V_j]=\text{span}~\{[X,Y]\mid X\in V_1, Y\in V_j \}.
\end{equation*}
Therefore, the first layer of stratification, $V_1$ generates $\mathfrak{g}$ as a Lie algebra. $\G$ is then said to be of step $l$. $\R^n$ is the trivial example of such groups with $l=1$. The simplest nontrivial example of such a group is given by the Heisenberg group $\mathbb{H}^n$ and in this case $l=2$.

The Lie algebra $\mathfrak{g}$ admits a canonical family of dilations denoted by $\{\delta_r\}_{r>0}$. They are Lie algebra automorphisms and are defined by \cite[P.5]{Folland}:
\begin{equation*}
\delta_r\left(\sum_{j=1}^{l}X_j\right)=\sum_{j=1}^{l}r^jX_j,\:\:X_j\in V_j.
\end{equation*}
As $\mathfrak{g}$ is nilpotent, the Lie exponential map defines a diffeomorphism from the Lie algebra onto the group, $\exp:\mathfrak{g}\to \G$. The group identity will be denoted by $0$. Now the dilations $\delta_r$ lift via the exponential map to give a one-parameter group of automorphisms of $\G$ which will also be denoted by $\delta_r$. The bi-invariant Haar measure $m$ on $\G$, introduced in the Introduction, is the push-forward of the Lebesgue measure on $\mathfrak{g}$ via $\exp$. In fact $m$ is the Lebesgue measure of the underlying Euclidean space. 

A notion intimately connected with the concept of stratification, is the Homogeneous dimension, which is denoted by
\begin{equation*}
Q=\sum_{j=1}^lj(\text{dim}\:V_j)\:.
\end{equation*}
The term `homogeneous dimension' is motivated by the following property
\begin{equation} \label{homogenity}
m\left(\delta_r(E)\right)=r^Qm(E),
\end{equation}
which holds for all measurable sets $E\subset \G$ and $r>0$. A continuous function $\dd:\G\to[0,\infty)$ is called a homogeneous norm on $\G$ if it  satisfies the following properties:
\begin{enumerate}
	\item[(i)]$\dd$ is smooth on $\G\setminus\{0\}$,
	\item[(ii)] $\dd(\delta_r(x))=r\:\dd(x)$, for all $r>0,\:x\in \G$,
	\item [(iii)]$\dd(x^{-1})=\dd(x)$, for all $x\in \G$,
	\item[(iv)]$\dd(x)=0$ if and only if $x=0$.
\end{enumerate}
A homogeneous norm usually only defines a quasi-metric on $\G$. Now for a choice of a collection of left invariant vector fields $\{X_1,\cdots,X_{N_1}\}$ that form a basis of the first stratification layer of $\mathfrak{g}$, one actually has a canonical choice of a left-invariant metric on $\G$, called the Carnot-Carath\'eodory metric, which we simply denote by $d$. The group $\G$ equipped with the metric $d$ is a complete, separable metric space. Corresponding to $d$, we have the induced homogeneous norm on $\G$,
\begin{equation*}
\dd(x):=d(0,x)\:,\: x\in \G\:.
\end{equation*}
For $x\in \G$ and $r>0$,  the Carnot-Carath\'eodory ball centred at $x$ with radius $r$ is defined as
\begin{equation*}
B(x,r):=\{y\in \G \mid \dd(y^{-1} \circ x)=d(x,y)<r\}\:.
\end{equation*}
From the above properties of the metric, it follows that $B(x,r)$ is the left translate by $x$ of the ball $B(0,r)$ which in turn, is the image under $\delta_r$ of the ball $B(0,1)$. For more details about Carnot-Carath\'eodory metrics, see \cite[pp. 232-236]{BLU}.

The above choice of $\{X_1,\cdots,X_{N_1}\}$ yields a canonical sub-elliptic operator,
\begin{equation*}
\mathcal{L}=\sum_{j=1}^{N_1}X_j^2,
\end{equation*}
which is called a sub-Laplacian for $\G$. The fundamental solution of the Heat operator corresponding to $\mathcal{L}$ is given by
	\begin{equation*}
	\Gamma(x,t;\xi):=\Gamma(\xi^{-1}\circ x,t),\:\:\:\: x\in \G,\:\xi\in \G,\:t\in(0,\infty),
	\end{equation*}where $\Gamma$ is a smooth, strictly positive function on $\G\times(0,\infty)$, as introduced in (\ref{rep_formula}). The kernel is normalized so that 
\begin{equation} \label{kernel_lebesgue}
\Gamma[m] \equiv 1 \:.
\end{equation}
For non-negative $f \in L^1(\G)$, the integral in (\ref{rep_formula}) for the measure $\mu=f\:dm$, is well-defined for all $(x,t) \in  \G \times (0,\infty)$ and will be denoted by $\Gamma[f]$, in stead of $\Gamma[f\:dm]$. Now by the pointwise estimates of $\Gamma$ given by \cite[Theorems 5.1, 5.2]{BLU1} and the fact that any two homogeneous norms on $\G$ are equivalent \cite[Proposition 5.1.4]{BLU}, there exists a constant $c_1 \ge 1$, depending only on the choice of $\{X_1,\cdots,X_{N_1}\}$ such that we have the following Gaussian estimates
\begin{equation} \label{Gaussian_estimates}
c^{-1}_1\:t^{-\frac{Q}{2}}\:\exp\left(-\frac{c_1 \dd(x)^2}{t}\right) \le \Gamma(x,t) \le c_1\:t^{-\frac{Q}{2}}\:\exp\left(-\frac{ \dd(x)^2}{c_1 t}\right)\:,
\end{equation} 
for every $x \in \G$ and $t \in (0,\infty)$.

The formula for integration in polar coordinates is given by \cite[Proposition 1.15]{Folland}: for all $f \in L^1(\G)$,
\begin{equation} \label{polar_coordinates}
\int_{\G}f(x)\:dm(x)=\int_{0}^\infty \int_S f\left(\delta_r(\omega)\right)\:r^{Q-1}\:d\sigma(\omega)dr\:,
\end{equation}
where $S=\left\{\omega \in \G \mid \dd(\omega)=1\right\}$, $\sigma$ is a unique positive Radon measure on $S$ and $dr$ is the restriction of the Lebesgue measure on $\R$ to $[0,\infty)$.

\subsection{Hausdorff dimensions:} In this section, we briefly recall the definitions of Hausdorff dimensions, Hausdorff measures. These can be found in \cite{F}. 

Let $(X,d)$ be a metric space. Then for a set $E\subset X$, $\ddd(E)$ will denote the diameter of $E$. For $\varepsilon > 0$, an $\varepsilon$-cover of $E \subset X$ is a countable (or finite) collection of sets $\{U_i\}$ with 
\begin{equation*}
0 < \ddd\left(U_i\right) \le \varepsilon \text{, for all }i \text{ such that } E \subset \displaystyle\bigcup_{i} U_i \:.
\end{equation*}
For $s \ge 0$, we recall that
\begin{equation*}
\mathcal{H}^s_\varepsilon(E) := \inf \left\{\displaystyle\sum_{i} {\left(\ddd\left(U_i\right)\right)}^s \mid \{U_i\} \text{ is an } \varepsilon\text{-cover of } E\right\}\:.
\end{equation*}
Then the $s$-dimensional Hausdorff outer measure of $E$ is defined by,
\begin{equation*}
\mathcal{H}^s(E) := \displaystyle\lim_{\varepsilon \to 0} \mathcal{H}^s_\varepsilon(E) \:.
\end{equation*}
The above value remains unaltered if one only considers covers consisting of balls. This outer measure restricts to a measure (also denoted by) $\mathcal{H}^s$ on a $\sigma$-algebra that contains all Borel sets. It is called the $s$-dimensional Hausdorff measure.

The Hausdorff dimension of $E$ is defined by,
\begin{equation*}
dim_{\mathcal{H}}E := \inf \left\{s \ge 0 \mid \mathcal{H}^s(E) =0\right\} \:.
\end{equation*}
If $s=dim_{\mathcal{H}}E$, then $\mathcal{H}^s(E)$ may be zero or infinite or $\mathcal{H}^s(E) \in (0,\infty)$\:. We will require the following result on existence of sets of desired Hausdorff dimensions, which is a special case of \cite[Corollary 7]{H}:
\begin{lemma} \label{existence}
If $Y$ is a Borel subset of a complete, separable metric space such that for some $s>0$, $\mathcal{H}^s(Y)=\infty$, then there exists a compact subset $Z \subset Y$ with $dim_{\mathcal{H}}Z=s$.
\end{lemma}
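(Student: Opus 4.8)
The plan is to deduce the statement from the classical ``subset theorem'' for Hausdorff measures together with an elementary computation of Hausdorff dimension from a set of positive finite measure. Concretely, I would first reduce the problem to the following claim: \emph{there exists a compact set $Z\subset Y$ with $0<\mathcal{H}^s(Z)<\infty$.} Indeed, once such a $Z$ is in hand, the monotonicity of $t\mapsto \mathcal{H}^t(Z)$ finishes the argument at once: if $t>s$ then $\mathcal{H}^s(Z)<\infty$ forces $\mathcal{H}^t(Z)=0$, while if $t<s$ then $\mathcal{H}^s(Z)>0$ forces $\mathcal{H}^t(Z)=\infty$; hence $dim_{\mathcal{H}}Z=\inf\{t\ge 0:\mathcal{H}^t(Z)=0\}=s$, and $Z$ is the required compact set.

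So the whole content lies in producing $Z$. Here I would invoke the subset theorem in the form valid for analytic (in particular Borel) subsets of a complete separable metric space: if $\mathcal{H}^s(Y)=\infty$, then for every prescribed $c\in(0,\infty)$ there is a compact $Z\subset Y$ with $\mathcal{H}^s(Z)=c$ --- this is exactly \cite[Corollary 7]{H}, going back to Besicovitch for $Y\subset\R^n$ and to Davies for analytic sets in $\R^n$. If I had to reprove it, the two ingredients would be: (a) Choquet's capacitability theorem applied to the Hausdorff content $\mathcal{H}^s_\infty$ (which is a Choquet capacity on a Polish space, and satisfies $\mathcal{H}^s_\infty(E)>0\iff\mathcal{H}^s(E)>0$), giving a compact $K\subset Y$ with $\mathcal{H}^s_\infty(K)>0$ and hence $\mathcal{H}^s(K)>0$; and (b) a Cantor-scheme construction inside $K$, building a nested sequence of compact sets $K=K_0\supset K_1\supset\cdots$, where $K_{n+1}$ is a finite union of pieces of $K_n$ of diameters $\le 2^{-n}$ chosen so that the content of each $K_n$ stays $\ge c$ while an upper bound on the admissible covering sums is kept finite; then $Z=\bigcap_n K_n$ is compact, $\mathcal{H}^s(Z)\ge \mathcal{H}^s_\infty(Z)\ge c>0$, and the control on the covering sums yields $\mathcal{H}^s(Z)<\infty$.

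The main obstacle is precisely step (b) together with the descriptive-set-theoretic input in (a): passing from ``$\mathcal{H}^s(Y)=\infty$'' to ``a compact subset of positive \emph{and finite} $\mathcal{H}^s$-measure'' for a set that is only assumed Borel (not closed or compact) in an abstract Polish space is the non-trivial heart of the matter --- without capacitability one cannot even extract a compact subset carrying positive content, and without the scale-by-scale pruning one would only control $\mathcal{H}^s(Z)$ from below. Everything else (the reduction in the first paragraph and the final dimension computation) is routine. Since all of this is packaged in \cite[Corollary 7]{H}, the actual proof I would write is just the observation above: apply \cite[Corollary 7]{H} with, say, $c=1$ to obtain a compact $Z\subset Y$ with $\mathcal{H}^s(Z)=1\in(0,\infty)$, and conclude $dim_{\mathcal{H}}Z=s$ from the monotonicity of $t\mapsto\mathcal{H}^t(Z)$.
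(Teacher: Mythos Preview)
Your proposal is correct and aligns exactly with the paper's treatment: the paper does not prove this lemma either but simply records it as a special case of \cite[Corollary 7]{H}. Your added reduction---that a compact $Z$ with $0<\mathcal{H}^s(Z)<\infty$ automatically has $dim_{\mathcal{H}}Z=s$ by the monotonicity of $t\mapsto\mathcal{H}^t(Z)$---just makes explicit why ``special case'' is the right word.
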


\section{Upper bound on the Hausdorff dimension}
In this section, we work under the assumption that $\mu$ is a Radon measure on $\G$ such that the integral, 
\begin{equation}\label{measure_assumption}
\Gamma[\mu](x,t)= \int_{\G} \Gamma(\xi^{-1} \circ x,t)\: d\mu(\xi) <+\infty\:, 
\end{equation}
for all $(x,t) \in  \G \times (0,\infty)$\:. We first prove the following auxiliary result:
\begin{lemma} \label{lemma1}
Let $\alpha \in [0,Q]$. Then for any $\varepsilon >0$ and any $x \in \G$,
\begin{equation*}
\lim_{t \to 0+} t^{\frac{\alpha}{2}} \int_{\G \setminus B(x,\varepsilon)} \Gamma(\xi^{-1} \circ x,t)\: d\mu(\xi) =0\:.
\end{equation*}
\end{lemma}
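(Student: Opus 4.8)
The plan is to exploit the upper Gaussian bound in \eqref{Gaussian_estimates} together with the finiteness of $\Gamma[\mu](x,c)$ at a fixed time $c\in(0,\infty)$. Fix $x\in\G$ and $\varepsilon>0$. For $\xi\in\G\setminus B(x,\varepsilon)$ we have $\dd(\xi^{-1}\circ x)=d(x,\xi)\ge\varepsilon$, so the upper bound gives, for $t\in(0,c)$,
\begin{equation*}
t^{\frac{\alpha}{2}}\,\Gamma(\xi^{-1}\circ x,t)\le c_1\,t^{\frac{\alpha}{2}-\frac{Q}{2}}\exp\!\left(-\frac{\dd(\xi^{-1}\circ x)^2}{c_1 t}\right).
\end{equation*}
The idea is to compare this directly against $\Gamma(\xi^{-1}\circ x,c)$, which by the lower bound in \eqref{Gaussian_estimates} is at least $c_1^{-1}c^{-Q/2}\exp(-c_1\dd(\xi^{-1}\circ x)^2/c)$. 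So it suffices to find a function $\phi(t)\to 0$ as $t\to0+$, independent of $\xi$ (for $\xi$ outside the ball), with
\begin{equation*}
c_1\,t^{\frac{\alpha}{2}-\frac{Q}{2}}\exp\!\left(-\frac{\dd(\xi^{-1}\circ x)^2}{c_1 t}\right)\le \phi(t)\,c_1^{-1}c^{-Q/2}\exp\!\left(-\frac{c_1\dd(\xi^{-1}\circ x)^2}{c}\right),
\end{equation*}
after which $t^{\alpha/2}\int_{\G\setminus B(x,\varepsilon)}\Gamma(\xi^{-1}\circ x,t)\,d\mu(\xi)\le \phi(t)\,\Gamma[\mu](x,c)\to 0$ and we are done.

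The key point is that for $t$ small and $\dd(\xi^{-1}\circ x)\ge\varepsilon$, the negative exponential $\exp(-\dd^2/(c_1 t))$ decays much faster in $t$ than any negative power $t^{\alpha/2-Q/2}$ grows, and moreover it dominates the fixed-time exponential. Concretely I would write $\exp(-\dd^2/(c_1t))=\exp(-\dd^2/(2c_1t))\exp(-\dd^2/(2c_1t))$, bound the second factor using $\dd\ge\varepsilon$ by $\exp(-\varepsilon^2/(2c_1t))$, and, after shrinking $t$ so that $1/(2c_1 t)\ge c_1/c$, bound the first factor by $\exp(-c_1\dd^2/c)$. This yields
\begin{equation*}
t^{\frac{\alpha}{2}}\,\Gamma(\xi^{-1}\circ x,t)\le c_1^2 c^{Q/2}\,t^{\frac{\alpha-Q}{2}}\exp\!\left(-\frac{\varepsilon^2}{2c_1 t}\right)\,\Gamma(\xi^{-1}\circ x,c),
\end{equation*}
so one may take $\phi(t)=c_1^2 c^{Q/2}\,t^{(\alpha-Q)/2}\exp(-\varepsilon^2/(2c_1 t))$, which indeed tends to $0$ as $t\to0+$ since the exponential dominates the power.

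Finally I would integrate this pointwise inequality in $d\mu(\xi)$ over $\G\setminus B(x,\varepsilon)$, using $\Gamma[\mu](x,c)=\int_\G\Gamma(\xi^{-1}\circ x,c)\,d\mu(\xi)<+\infty$ from \eqref{measure_assumption}, to obtain $t^{\alpha/2}\int_{\G\setminus B(x,\varepsilon)}\Gamma(\xi^{-1}\circ x,t)\,d\mu(\xi)\le\phi(t)\,\Gamma[\mu](x,c)$ for all sufficiently small $t$, and let $t\to0+$. I do not expect a genuine obstacle here; the only mild care needed is to make the suppression factor $\phi(t)$ uniform in $\xi$ over the complement of the ball (handled by using $\dd\ge\varepsilon$ as above) and to verify the threshold on $t$ depends only on $c_1,c,\varepsilon$. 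An alternative, essentially equivalent, route is to split $\G\setminus B(x,\varepsilon)$ into dyadic annuli $\{2^k\varepsilon\le\dd(\xi^{-1}\circ x)<2^{k+1}\varepsilon\}$ and sum a geometric series; I would prefer the one-line domination above since it avoids the bookkeeping.
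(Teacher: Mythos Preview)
Your proof is correct and follows essentially the same approach as the paper: both split the Gaussian exponential into two factors, use $\dd\ge\varepsilon$ on one to extract the decaying prefactor $\exp(-\varepsilon^2/(2c_1 t))$, and compare the other to the fixed-time heat kernel via the lower Gaussian bound so that the integral is controlled by $\Gamma[\mu](x,c)<\infty$. The only cosmetic difference is that the paper takes the specific reference time $c=c_1^2$ (so the threshold on $t$ becomes simply $t<1/2$), whereas you leave $c$ generic and state the threshold $t\le c/(2c_1^2)$ explicitly.
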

\begin{proof}
We fix $x \in \G$ and $\varepsilon >0$. Then by the upper bound in (\ref{Gaussian_estimates}),
\begin{equation*}
 t^{\frac{\alpha}{2}} \int_{\G \setminus B(x,\varepsilon)} \Gamma(\xi^{-1} \circ x,t)\: d\mu(\xi) \le c_1\:t^{-\frac{1}{2}\left(Q-\alpha\right)} \int_{d(x,\xi)\ge \varepsilon}\exp\left(-\frac{ d(x,\xi)^2}{c_1 t}\right)\:d\mu(\xi)\:.
\end{equation*}
Now for $t \in (0,1/2)$, the term on the right hand side of the above inequality can be further dominated as follows,
\begin{eqnarray*}
&&c_1\:t^{-\frac{1}{2}\left(Q-\alpha\right)} \int_{d(x,\xi) \ge \varepsilon}\exp\left(-\frac{ d(x,\xi)^2}{c_1 t}\right)\:d\mu(\xi) \\
& = & c_1\:t^{-\frac{1}{2}\left(Q-\alpha\right)} \int_{d(x,\xi) \ge \varepsilon}\exp\left(-\frac{ d(x,\xi)^2}{c_1}\left(\frac{1}{2}t^{-1}+\frac{1}{2}t^{-1}\right)\right)\:d\mu(\xi) \\
& \le & c_1\:t^{-\frac{1}{2}\left(Q-\alpha\right)} \int_{d(x,\xi) \ge \varepsilon}\exp\left(-\frac{ d(x,\xi)^2}{c_1}\left(\frac{1}{2}t^{-1}+1\right)\right)\:d\mu(\xi)\\
& \le & c_1\:t^{-\frac{1}{2}\left(Q-\alpha\right)}\:\exp\left(-\frac{ \varepsilon^2}{2c_1t} \right) \int_{\G}\exp\left(-\frac{ d(x,\xi)^2}{c_1}\right)\:d\mu(\xi)\:.
\end{eqnarray*}
Now by the lower bound in (\ref{Gaussian_estimates}),
\begin{eqnarray*}
&& c_1\:t^{-\frac{1}{2}\left(Q-\alpha\right)}\:\exp\left(-\frac{ \varepsilon^2}{2c_1t} \right) \int_{\G}\exp\left(-\frac{ d(x,\xi)^2}{c_1}\right)\:d\mu(\xi) \\
&\le & \left(c^{Q+2}_1\:\Gamma[\mu](x,c^2_1)\right)t^{-\frac{1}{2}\left(Q-\alpha\right)}\:\exp\left(-\frac{ \varepsilon^2}{2c_1t} \right)\:.
\end{eqnarray*}
Then in view of (\ref{measure_assumption}), the above inequality yields
\begin{equation*}
\lim_{t \to 0+} t^{\frac{\alpha}{2}} \int_{\G \setminus B(x,\varepsilon)} \Gamma(\xi^{-1} \circ x,t)\: d\mu(\xi) =0\:.
\end{equation*}
\end{proof}
This brings us to our next estimate, which relates small time asymptotics of positive solutions of the Heat equation, with the upper density of its boundary measure and is the heart of the matter in proving Theorem \ref{Hausdorff_bound}:
\begin{lemma} \label{lemma2}
Let $\alpha \in [0,Q)$. Then for any $x \in \G$,
\begin{equation} \label{limsup_estimate}
\limsup_{t \to 0+}\: t^{\frac{\alpha}{2}}\:\Gamma[\mu](x,t) \le c(\alpha)\:\limsup_{r \to 0}\: \frac{\mu\left(B(x,r)\right)}{r^{Q-\alpha}}\:,
\end{equation}
with
\begin{equation*}
c(\alpha)=c^{\left(1+\frac{Q-\alpha}{2}\right)}_1 \left(Q-\alpha\right)\int_0^\infty \exp(-r^2)\:r^{Q-\alpha-1}\:dr\:,
\end{equation*}
where $c_1$ is as in (\ref{Gaussian_estimates}).
\end{lemma}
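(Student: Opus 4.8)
Write $L:=\limsup_{r\to 0}\mu(B(x,r))/r^{Q-\alpha}$. If $L=+\infty$ the asserted inequality is vacuous, so assume $L<\infty$. Fix $\eta>0$ and choose $\varepsilon>0$ so small that $\mu(B(x,u))\le (L+\eta)\,u^{Q-\alpha}$ for every $0<u\le\varepsilon$; this is possible by the definition of the $\limsup$. The first step is to discard the far part: by Lemma \ref{lemma1}, $t^{\alpha/2}\int_{\G\setminus B(x,\varepsilon)}\Gamma(\xi^{-1}\circ x,t)\,d\mu(\xi)\to 0$ as $t\to 0+$, hence
\[
\limsup_{t\to 0+} t^{\frac{\alpha}{2}}\,\Gamma[\mu](x,t)=\limsup_{t\to 0+} t^{\frac{\alpha}{2}}\int_{B(x,\varepsilon)}\Gamma(\xi^{-1}\circ x,t)\,d\mu(\xi).
\]
Note the order of quantifiers: $\varepsilon$ is selected from the radial $\limsup$ first, and only afterwards does Lemma \ref{lemma1} let us throw away $\G\setminus B(x,\varepsilon)$ uniformly as $t\to 0+$.

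Next, on the localized piece I would apply the upper Gaussian bound in (\ref{Gaussian_estimates}) to get $t^{\alpha/2}\int_{B(x,\varepsilon)}\Gamma(\xi^{-1}\circ x,t)\,d\mu(\xi)\le c_1\, t^{-\frac{Q-\alpha}{2}}\int_{B(x,\varepsilon)}g_t(d(x,\xi))\,d\mu(\xi)$, where $g_t(s):=\exp(-s^2/(c_1t))$ is decreasing with $g_t(\infty)=0$. The heart of the argument is the layer-cake/integration-by-parts identity: writing $g_t(s)=\int_s^\infty(-g_t'(u))\,du$ and applying Tonelli's theorem,
\[
\int_{B(x,\varepsilon)}g_t(d(x,\xi))\,d\mu(\xi)=\int_0^\infty(-g_t'(u))\,\mu\bigl(\{\xi\in B(x,\varepsilon):d(x,\xi)<u\}\bigr)\,du=\int_0^\varepsilon(-g_t'(u))\,\mu(B(x,u))\,du+g_t(\varepsilon)\,\mu(B(x,\varepsilon)).
\]
This is exactly where the ``soft'' integration-by-parts replaces the Laplace transform of the Euclidean proof; the interchange is legitimate because $\mu$ is Radon and closed Carnot--Carath\'eodory balls are compact, so $\mu(B(x,\varepsilon))<\infty$.

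Finally I would estimate the two resulting terms. For the boundary term, $t^{-\frac{Q-\alpha}{2}}g_t(\varepsilon)=t^{-\frac{Q-\alpha}{2}}\exp(-\varepsilon^2/(c_1t))\to 0$ as $t\to 0+$, and $\mu(B(x,\varepsilon))<\infty$, so it contributes nothing to the $\limsup$. For the main term, the choice of $\varepsilon$ gives $\int_0^\varepsilon(-g_t'(u))\,\mu(B(x,u))\,du\le (L+\eta)\int_0^\infty(-g_t'(u))\,u^{Q-\alpha}\,du$, and the substitution $u=\sqrt{c_1t}\,v$ together with one integration by parts turns the last integral into $(c_1t)^{\frac{Q-\alpha}{2}}(Q-\alpha)\int_0^\infty e^{-v^2}v^{Q-\alpha-1}\,dv$. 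Assembling the constants, $c_1\cdot t^{-\frac{Q-\alpha}{2}}\cdot (c_1t)^{\frac{Q-\alpha}{2}}(Q-\alpha)\int_0^\infty e^{-v^2}v^{Q-\alpha-1}\,dv$ is precisely $c(\alpha)$, so $\limsup_{t\to 0+}t^{\alpha/2}\Gamma[\mu](x,t)\le (L+\eta)\,c(\alpha)$; letting $\eta\to 0$ yields (\ref{limsup_estimate}). I expect the only real obstacle to be the bookkeeping in the middle paragraph --- justifying the Tonelli interchange, making sure the $u=\varepsilon$ boundary term genuinely vanishes rather than contributing, and keeping the two uses of $\varepsilon$ (from the radial $\limsup$ and from Lemma \ref{lemma1}) consistent; the Gaussian integral and the identification of $c(\alpha)$ are routine.
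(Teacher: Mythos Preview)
Your proposal is correct and follows essentially the same route as the paper's proof: localize via Lemma~\ref{lemma1}, apply the upper Gaussian bound (\ref{Gaussian_estimates}), and use a Fubini/layer-cake integration by parts to convert the radial integral against $d\mu$ into an integral of $-g_t'(u)\,\mu(B(x,u))$, then feed in the density bound and identify the constant $c(\alpha)$ after the substitution $u=\sqrt{c_1t}\,v$. The only cosmetic differences are that the paper parametrizes by $\lambda>c(\alpha)L$ rather than $L+\eta$, and absorbs the boundary term $g_t(\varepsilon)\mu(B(x,\varepsilon))$ into the reversed integration by parts instead of showing (as you do) that it vanishes separately as $t\to 0+$.
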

\begin{proof}
We fix $x \in \G$. If the term on the right hand side of (\ref{limsup_estimate}) is $+\infty$, then the result follows automatically. So let us assume that there exists $\lambda>0$, such that
\begin{equation*}
c(\alpha)\:\limsup_{r \to 0}\: \frac{\mu\left(B(x,r)\right)}{r^{Q-\alpha}} < \lambda\:.
\end{equation*}
Then there exists $\varepsilon>0$, such that
\begin{equation} \label{limsup_eq1}
\mu\left(B(x,r)\right) < \frac{\lambda\: r^{Q-\alpha}}{c(\alpha)}\:,\text{ for } r \in (0,\varepsilon]\:.
\end{equation}
We now decompose,
\begin{eqnarray*}
t^{\frac{\alpha}{2}}\:\Gamma[\mu](x,t) &=& t^{\frac{\alpha}{2}} \int_{\G \setminus B(x,\varepsilon)} \Gamma(\xi^{-1} \circ x,t)\: d\mu(\xi) + t^{\frac{\alpha}{2}} \int_{B(x,\varepsilon)} \Gamma(\xi^{-1} \circ x,t)\: d\mu(\xi) \\
&=& I_1(t)\:+\:I_2(t)\:.
\end{eqnarray*}
By Lemma \ref{lemma1}, we have
\begin{equation} \label{limsup_eq2}
\lim_{t \to 0+} I_1(t)=0\:.
\end{equation}
We now focus on $I_2$. By the upper bound in (\ref{Gaussian_estimates}), we have
\begin{equation} \label{limsup_eq3}
I_2(t) \le  c_1\:t^{-\frac{1}{2}\left(Q-\alpha\right)} \int_{d(x,\xi)< \varepsilon}\exp\left(-\frac{ d(x,\xi)^2}{c_1 t}\right)\:d\mu(\xi)\:.
\end{equation}
Now setting 
\begin{equation*}
M(r):= \int_{d(x,\xi)< r} d\mu(\xi)=\mu\left(B(x,r)\right) \:\text{ and }\: F(r):= \exp\left(-\frac{r^2}{c_1 t}\right)\:, \text{ for } r > 0\:,
\end{equation*}
we note that 
\begin{equation*}
\int_{[0,r]}dM(s)=M(r)\:,
\end{equation*}
and by (\ref{limsup_eq1}), we have
\begin{equation} \label{limsup_eq4}
M(r) < \frac{\lambda\: r^{Q-\alpha}}{c(\alpha)}\:,\text{ for } r \in (0,\varepsilon]\:.
\end{equation}
We can now rewrite the term on the right hand side of the inequality (\ref{limsup_eq3}) as,
\begin{equation} \label{limsup_eq5}
c_1\:t^{-\frac{1}{2}\left(Q-\alpha\right)} \int_{d(x,\xi)< \varepsilon}\exp\left(-\frac{ d(x,\xi)^2}{c_1 t}\right)\:d\mu(\xi) = c_1\:t^{-\frac{1}{2}\left(Q-\alpha\right)} \int_{[0,\varepsilon]} F(r)\:dM(r)\:.
\end{equation}
We now perform an integration-by-parts, which is justified by the Fubini's theorem. Consider the integral
\begin{equation} \label{limsup_eq6}
\mathbb{I}=\int_{[0,\varepsilon]}\int_{s}^\varepsilon F'(r)\:dr\:dM(s)\:.
\end{equation}
 Integrating with respect to $s$ gives,
\begin{equation*}
\mathbb{I}=\int_0^\varepsilon F'(r)\:dr \int_{[0,r]}dM(s) = \int_0^\varepsilon F'(r)\:M(r)\:dr\:.
\end{equation*}
On the other hand, integrating (\ref{limsup_eq6}) with respect to $r$ gives,
\begin{equation*}
\mathbb{I}= \int_{[0,\varepsilon]} dM(s)\:\int_s^\varepsilon F'(r)\:dr = \int_{[0,\varepsilon]}\: \left(F(\varepsilon)-F(s)\right) dM(s)\:.
\end{equation*}
Equating the above two expressions, we obtain
\begin{eqnarray*}
 \int_{[0,\varepsilon]} F(r)\:dM(r)&=& \int_{[0,\varepsilon]}\: F(\varepsilon)\: dM(r) - \int_0^\varepsilon F'(r)\:M(r)\:dr \\
&=& F(\varepsilon)\:M(\varepsilon)  - \int_0^\varepsilon F'(r)\:M(r)\:dr\:.
\end{eqnarray*}
Now plugging the above in (\ref{limsup_eq5}), we get that
\begin{eqnarray*}
&& c_1\:t^{-\frac{1}{2}\left(Q-\alpha\right)} \int_{d(x,\xi)< \varepsilon}\exp\left(-\frac{ d(x,\xi)^2}{c_1 t}\right)\:d\mu(\xi)\\
& =& c_1\:t^{-\frac{1}{2}\left(Q-\alpha\right)} \left( F(\varepsilon)\:M(\varepsilon)  + \int_0^\varepsilon \left(-F'(r)\right)\:M(r)\:dr\right)\:.
\end{eqnarray*} 
Now as $F>0$ and $F'<0$, applying (\ref{limsup_eq4}) and repeating the integration-by-parts arguments, we get that 
\begin{eqnarray*}
&& c_1\:t^{-\frac{1}{2}\left(Q-\alpha\right)} \int_{d(x,\xi)< \varepsilon}\exp\left(-\frac{ d(x,\xi)^2}{c_1 t}\right)\:d\mu(\xi) \\
& < & \lambda \left(\frac{c_1\:t^{-\frac{1}{2}\left(Q-\alpha\right)} }{c(\alpha)}\right) \left[F(\varepsilon) \varepsilon^{Q-\alpha} \:-\: \int_0^\varepsilon  F'(r)\:r^{Q-\alpha}\:dr \right] \\
&=& \lambda \left(\frac{c_1\:t^{-\frac{1}{2}\left(Q-\alpha\right)} }{c(\alpha)}\right) \int_0^\varepsilon F(r)\:d(r^{Q-\alpha}) \:.
\end{eqnarray*}
Then plugging in the definition of $F$ and doing elementary change of variables yield
\begin{eqnarray*}
&& \lambda \left(\frac{c_1\:t^{-\frac{1}{2}\left(Q-\alpha\right)} }{c(\alpha)}\right) \int_0^\varepsilon F(r)\:d(r^{Q-\alpha})\\
& =& \lambda \left(\frac{c^{\left(1+\frac{Q-\alpha}{2}\right)}_1 \left(Q-\alpha\right) }{c(\alpha)}\right)\int_0^{\varepsilon/(c_1t)^{\frac{1}{2}}} \exp(-r^2)\:r^{Q-\alpha-1}\:dr \\
& \le & \lambda \left(\frac{c^{\left(1+\frac{Q-\alpha}{2}\right)}_1 \left(Q-\alpha\right) }{c(\alpha)}\right)\int_0^\infty \exp(-r^2)\:r^{Q-\alpha-1}\:dr\\
&=& \lambda\:.
\end{eqnarray*}
Hence we get that
\begin{equation} \label{limsup_eq7}
I_2(t) < \lambda\:.
\end{equation}
Then combining (\ref{limsup_eq2}) and (\ref{limsup_eq7}), it follows that
\begin{equation} \label{limsup_eq8}
\limsup_{t \to 0+}\: t^{\frac{\alpha}{2}}\:\Gamma[\mu](x,t)  \le \limsup_{t \to 0+}\: I_1(t) + \limsup_{t \to 0+}\: I_2(t) \le \lambda\:. 
\end{equation}
As $\lambda>0$, was arbitrarily chosen such that
\begin{equation*}
c(\alpha)\:\limsup_{r \to 0}\: \frac{\mu\left(B(x,r)\right)}{r^{Q-\alpha}} < \lambda\:,
\end{equation*} 
the estimate (\ref{limsup_eq8}) yields the desired result and completes the proof of Lemma \ref{lemma2}.
\end{proof}

We now present the proof of Theorem \ref{Hausdorff_bound}:

\begin{proof}[Proof of Theorem \ref{Hausdorff_bound}]
We first focus on estimating $\mathcal{H}^{Q-\alpha}\left(S_\alpha(u)\right)$. For $\alpha=0$, the result follows from the Fatou-Kato theorem of Bonfiglioli et. al. \cite[Theorem 1.1]{BU}. For $\alpha=Q$, the estimate (\ref{kernel_general_estimate}) implies that $S_Q(u)=\emptyset$ and hence the result follows. 

We now fix $\alpha \in (0,Q)$. Decompose
\begin{equation*}
\G = \bigcup_{j=1}^\infty E_j\:,
\end{equation*}
where
\begin{eqnarray*}
&&E_1 :=\left\{x \in \G \mid \dd(x) \le 1\right\}\:,\\
&&E_j := \left\{x \in \G \mid j-1 < \dd(x) \le j\right\}\:,\: j \ge 2\:.
\end{eqnarray*}
Now choose and fix $j \in \N$ and set
\begin{equation*}
S^j_\alpha(u):=S_\alpha(u)\cap E_j =\{x \in E_j \mid \displaystyle\limsup_{t \to 0+} t^{\alpha/2} u(x,t)=+\infty\}\:.
\end{equation*}
Furthermore, for each $k \in \N$, consider
\begin{equation*}
S^{j,k}_\alpha(u):=\{x \in E_j \mid \displaystyle\limsup_{t \to 0+} t^{\alpha/2} u(x,t)>k\}\:.
\end{equation*}
We now choose and fix $\varepsilon \in (0,1/2)$. Then by Lemma \ref{lemma2}, for each $x \in S^{j,k}_\alpha(u)$, there exists a Carnot-Carath\'eodory ball $B(x,r_x)$ with $r_x \le \varepsilon$ such that 
\begin{equation} \label{thm1eq1}
r^{Q-\alpha}_x<c(\alpha)\: \frac{\mu\left(B(x,r_x)\right)}{k}\:,
\end{equation}
where $\mu$ is the boundary measure of $u$, that is, $u=\Gamma[\mu]$\:. Now an application of Vitali 5-covering Lemma on the family of balls $\{B(x,r_x)\}_{x \in S^{j,k}_\alpha(u)}$, yields a countable subcollection of balls $\{B(x_l,r_l)\}_{l=1}^\infty$ satisfying (\ref{thm1eq1}) such that
\begin{itemize}
\item $r_l \le \varepsilon$, for all $l \in \N$,
\item $B(x_{l_p},r_{l_p}) \cap B(x_{l_q},r_{l_q}) = \emptyset$\:, for all $l_p \ne l_q$,
\item $S^{j,k}_\alpha(u) \subset \displaystyle\bigcup_{l=1}^\infty B(x_l,5r_l)$\:.
\end{itemize}
Next applying (\ref{thm1eq1}), we get the following estimate for the diameters of the enlarged balls,
\begin{equation*}
\sum_{l=1}^\infty \left(\ddd\left(B(x_l,5r_l)\right)\right)^{Q-\alpha} \lesssim \frac{1}{k} \sum_{l=1}^\infty \mu\left(B(x_l,r_l)\right)= \frac{1}{k}\: \mu\left(\bigcup_{l=1}^\infty B(x_l,r_l)\right)\:,
\end{equation*}
where the implicit constant depends only  on the parameter $\alpha$. Then as $\varepsilon \in (0,1/2)$ and $x_l \in S^{j,k}_\alpha(u)$ for all $l \in \N$, we note that
\begin{equation*}
\bigcup_{l=1}^\infty B(x_l,r_l) \subset
\begin{cases}
	 E_1 \cup E_2\:,&\text{ if } j=1\:,\\
	 E_{j-1} \cup E_j \cup E_{j+1}\:,&\text{ if } j \ge 2\:.
	\end{cases}
\end{equation*}
This yields that
\begin{equation*}
\sum_{l=1}^\infty \left(\ddd\left(B(x_l,5r_l)\right)\right)^{Q-\alpha} \lesssim \frac{1}{k}\:,
\end{equation*}
where the implicit constant depends only  on the parameter $\alpha$ and the choice of $j$. Hence letting $\varepsilon \to 0$, we get that
\begin{equation*}
\mathcal{H}^{Q-\alpha}\left(S^{j,k}_{\alpha}(u)\right) \lesssim \frac{1}{k}\:.
\end{equation*}
Now as 
\begin{equation*}
S^j_{\alpha}(u) \subset S^{j,k}_{\alpha}(u)\:, \text{ for all } k \ge 1\:, 
\end{equation*}
the above estimate implies that
\begin{equation*}
\mathcal{H}^{Q-\alpha}\left(S^j_{\alpha}(u)\right) =0\:.
\end{equation*}
Therefore, we get that
\begin{equation*}
\mathcal{H}^{Q-\alpha}\left(S_\alpha(u)\right)=\mathcal{H}^{Q-\alpha}\left(\bigcup_{j=1}^\infty S^j_{\alpha}(u)\right) \le \sum_{j=1}^\infty
\mathcal{H}^{Q-\alpha}\left(S^j_{\alpha}(u)\right) =0\:.
\end{equation*}

We are now only left to estimate $dim_{\mathcal{H}}\left(T_\alpha(u)\right)$. For $\alpha=0$, the result is trivial as  $dim_{\mathcal{H}}\left(\G\right)=Q$. Now for $0<\beta<\alpha \le Q$, we note that $T_\alpha(u) \subset S_\beta(u)$ and hence,
\begin{equation*}
\mathcal{H}^{Q-\beta}\left(T_\alpha(u)\right) \le \mathcal{H}^{Q-\beta}\left(S_\beta(u)\right)=0\:.
\end{equation*}
Therefore,
\begin{equation*}
dim_{\mathcal{H}}\left(T_\alpha(u)\right) \le \inf_{\beta \in (0,\alpha)}\:\left(Q-\beta\right)= Q-\alpha\:.
\end{equation*}
This completes the proof of Theorem \ref{Hausdorff_bound}\:.
\end{proof}

\section{The sharpness result}
In this section, we prove Theorem \ref{sharp_thm}. In order to do so, we first construct the building blocks of the divergence by means of a `uniform divergence' estimate for small time:
\begin{lemma} \label{heat_indicator_estimate}
There exists a constant $c_2 \in (0,1)$, depending only on the choice of $\{X_1,\cdots,X_{N_1}\}$ such that for all $x \in \G\:,\varepsilon>0$ and $t \in (0,c^2_2 \varepsilon^2]$,
\begin{equation*}
\Gamma\left[\chi_{B(x,\varepsilon)}\right](x,t) \ge \frac{1}{2} \:.
\end{equation*}
\end{lemma}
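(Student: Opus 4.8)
The plan is to use the lower Gaussian bound in (\ref{Gaussian_estimates}) together with the scaling/left-invariance structure of the heat kernel to reduce everything to a single fixed integral that is strictly positive. First I would write out
\[
\Gamma\left[\chi_{B(x,\varepsilon)}\right](x,t) = \int_{B(x,\varepsilon)} \Gamma(\xi^{-1}\circ x, t)\, dm(\xi) = \int_{B(0,\varepsilon)} \Gamma(y,t)\, dm(y),
\]
where the last step uses the left-translation invariance of $m$ and the substitution $y = \xi^{-1}\circ x$ (which sends $B(x,\varepsilon)$ onto $B(0,\varepsilon)$, since $d(x,\xi) = \dd(\xi^{-1}\circ x)$). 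Applying the lower bound in (\ref{Gaussian_estimates}) gives
\[
\Gamma\left[\chi_{B(x,\varepsilon)}\right](x,t) \ge c_1^{-1}\, t^{-Q/2} \int_{B(0,\varepsilon)} \exp\left(-\frac{c_1\, \dd(y)^2}{t}\right) dm(y).
\]

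Next I would rescale. Set $\varepsilon = \varepsilon$ and write $t = s\varepsilon^2$ with $s \in (0, c_2^2]$ to be chosen; substituting $y = \delta_\varepsilon(z)$ and using (\ref{homogenity}), i.e.\ $dm(\delta_\varepsilon(z)) = \varepsilon^Q\, dm(z)$, together with $\dd(\delta_\varepsilon(z)) = \varepsilon\,\dd(z)$, turns the right-hand side into
\[
c_1^{-1}\, s^{-Q/2} \int_{B(0,1)} \exp\left(-\frac{c_1\,\dd(z)^2}{s}\right) dm(z).
\]
(Crucially all $\varepsilon$'s cancel, which is why the statement is uniform in $x$ and $\varepsilon$.) On $B(0,1)$ we have $\dd(z) < 1$, so $\exp(-c_1\dd(z)^2/s) \ge \exp(-c_1/s)$, and thus the quantity is bounded below by $c_1^{-1}\, s^{-Q/2} e^{-c_1/s}\, m(B(0,1))$. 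Since $m(B(0,1)) \in (0,\infty)$ is a fixed positive constant and $s^{-Q/2} e^{-c_1/s} \to \infty$ as $s \to 0+$, I can pick $s_0 > 0$ small enough that $c_1^{-1}\, s_0^{-Q/2} e^{-c_1/s_0}\, m(B(0,1)) \ge \tfrac12$; moreover, on $(0, s_0]$ the function $s \mapsto s^{-Q/2} e^{-c_1/s}$ is increasing (its logarithmic derivative is $-\tfrac{Q}{2s} + \tfrac{c_1}{s^2} = \tfrac{1}{s^2}(c_1 - \tfrac{Q}{2}s) > 0$ once $s < 2c_1/Q$), so the bound $\ge \tfrac12$ persists for all $s \in (0, s_0]$ after possibly shrinking $s_0$ below $2c_1/Q$. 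Finally set $c_2 := \min\{1, \sqrt{s_0}\} \in (0,1)$; then for all $t \in (0, c_2^2\varepsilon^2]$ we have $s = t/\varepsilon^2 \in (0, s_0]$ and the claimed estimate holds, with $c_2$ depending only on $c_1$, $Q$, and $m(B(0,1))$ — hence only on the choice of $\{X_1,\dots,X_{N_1}\}$.

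The only genuinely delicate point is the monotonicity bookkeeping: I need the lower bound to hold for \emph{all} $t$ up to $c_2^2\varepsilon^2$, not merely in the limit $t \to 0+$, so I must verify that $s \mapsto s^{-Q/2}e^{-c_1/s}$ is nondecreasing on the relevant interval and choose $s_0$ to lie inside the interval of monotonicity. Everything else is a routine change of variables using the dilation homogeneity (\ref{homogenity}) and left-invariance of $m$, plus the elementary fact that $m(B(0,1))$ is finite and positive (the ball $B(0,1)$ is bounded and open and nonempty in the Carnot–Carathéodory metric, and $m$ is Lebesgue measure on the underlying Euclidean space).
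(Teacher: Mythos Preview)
Your argument contains a genuine error at the key step. You claim that $s^{-Q/2}e^{-c_1/s}\to\infty$ as $s\to 0+$, but in fact this quantity tends to $0$: writing $u=1/s$, it equals $u^{Q/2}e^{-c_1 u}\to 0$ as $u\to\infty$. (Indeed this is consistent with your own monotonicity computation, which shows the function is \emph{increasing} on $(0,2c_1/Q)$; an increasing function on that interval cannot blow up at $0$.) So you cannot choose $s_0$ to make the crude lower bound $c_1^{-1}s^{-Q/2}e^{-c_1/s}m(B(0,1))$ exceed $\tfrac12$ by taking $s$ small.

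More fundamentally, even if you avoid the crude step $\exp(-c_1\dd(z)^2/s)\ge\exp(-c_1/s)$ and integrate honestly in polar coordinates, the lower Gaussian profile $c_1^{-1}t^{-Q/2}\exp(-c_1\dd(\cdot)^2/t)$ has total mass over $\G$ equal to a fixed constant depending on $c_1$ and $Q$, and since $c_1\ge 1$ there is no reason this constant is $\ge\tfrac12$. Hence the lower bound in (\ref{Gaussian_estimates}) alone can only yield $\Gamma[\chi_{B(x,\varepsilon)}](x,t)\ge c$ for some structural constant $c>0$, not the specific value $\tfrac12$. The paper's proof instead exploits the normalization $\Gamma[m]\equiv 1$ (equation (\ref{kernel_lebesgue})), writing
\[
\Gamma[\chi_{B(x,\varepsilon)}](x,t)=1-\int_{\G\setminus B(x,\varepsilon)}\Gamma(\xi^{-1}\circ x,t)\,dm(\xi),
\]
and then uses the \emph{upper} Gaussian bound to show the tail integral is $\le\tfrac12$ for $t\le c_2^2\varepsilon^2$. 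That route gives exactly $\tfrac12$. (For the application in Theorem~\ref{sharp_thm} a positive constant in place of $\tfrac12$ would of course suffice, so your strategy, once the asymptotics are corrected, would still prove what is actually needed there.)
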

\begin{proof}
Choose and fix $t \in (0,c^2_2 \varepsilon^2]$. By (\ref{kernel_lebesgue}), we have
\begin{equation*}
\Gamma\left[\chi_{B(x,\varepsilon)}\right](x,t) = 1-  \int_{\G \setminus B(x, \varepsilon)} \Gamma(\xi^{-1} \circ x,t)\: dm(\xi) \:.
\end{equation*}
By the upper bound in (\ref{Gaussian_estimates}) and the symmetry and the left invariance of the Carnot-Carath\'eodory metric, we have
\begin{eqnarray*}
\int_{\G \setminus B(x, \varepsilon)} \Gamma(\xi^{-1} \circ x,t)\: dm(\xi) &\le & c_1\:t^{-\frac{Q}{2}}\int_{\G \setminus B(x, \varepsilon)}\exp\left(-\frac{ \dd(\xi^{-1} \circ x)^2}{c_1 t}\right)\:dm(\xi) \\
&= & c_1\:t^{-\frac{Q}{2}}\int_{\G \setminus B(0, \varepsilon)}\exp\left(-\frac{ \dd(\xi)^2}{c_1 t}\right)\:dm(\xi)\:. 
\end{eqnarray*}
Now by the integration in polar coordinates formula (\ref{polar_coordinates}), 
\begin{equation*}
c_1\:t^{-\frac{Q}{2}}\int_{\G \setminus B(0, \varepsilon)}\exp\left(-\frac{ \dd(\xi)^2}{c_1 t}\right)\:dm(\xi) = c_1\sigma(S)\:t^{-\frac{Q}{2}}\int_{\varepsilon}^\infty \exp\left(-\frac{r^2}{c_1 t}\right)\:r^{Q-1}\:dr\:.
\end{equation*}
After some elementary change of variables, binomial expansion and simplification of the integration in terms of Gamma functions, we get that 
\begin{eqnarray*}
c_1\sigma(S)\:t^{-\frac{Q}{2}}\int_{\varepsilon}^\infty \exp\left(-\frac{r^2}{c_1 t}\right)\:r^{Q-1}\:dr & \le & c_3\: \exp\left(-\frac{\varepsilon^2}{c_1t}\right) \displaystyle\sum_{k=0}^{Q-1} {\left(\frac{\varepsilon}{\sqrt{t}}\right)}^{2k-Q} \\
& \le & c_3\: \exp\left(-\frac{\varepsilon^2}{c_1t}\right) \left(\frac{\left(\frac{\varepsilon}{\sqrt{t}}\right)^Q}{\left(\frac{\varepsilon}{\sqrt{t}}\right)-1}\right)\:,
\end{eqnarray*}
for some $c_3>0$, depending only on the choice of $\{X_1,\cdots,X_{N_1}\}$. Now note that there exists $c_4>1$, sufficiently large, such that for 
\begin{equation*}
\frac{\varepsilon}{\sqrt{t}} \ge c_4 \:,
\end{equation*}
one has
\begin{equation*}
\left(\frac{\varepsilon}{\sqrt{t}} -1 \right) \ge  \frac{\varepsilon}{2\sqrt{t}} \:,\text{ and } \exp\left(-\frac{\varepsilon^2}{c_1t}\right) \: {\left(\frac{\varepsilon}{\sqrt{t}}\right)}^Q \le \frac{1}{2} \:.
\end{equation*}
Thus setting 
\begin{equation*}
c_2 = \min \left\{\frac{1}{c_4}, \frac{1}{2c_3}\right\} \:,
\end{equation*}
the result follows as
\begin{eqnarray*}
\int_{\G \setminus B(x, \varepsilon)} \Gamma(\xi^{-1} \circ x,t)\: dm(\xi) \le c_3 \left(\frac{\sqrt{t}}{\varepsilon}\right) \le \frac{1}{2} \:.
\end{eqnarray*}
\end{proof}

We now prove Theorem \ref{sharp_thm}:

\begin{proof}[Proof of Theorem \ref{sharp_thm}]
We first prove part $(i)$. Let $E \subset \G$ with $\mathcal{H}^{Q-\alpha}(E)=0$. Then for any $k \in \N$, there exists a covering of $E$ by Carnot-Carath\'eodory balls $\{B^k_j\}_{j=1}^\infty$ such that
their diameters satisfy
\begin{equation} \label{sharp_thm_eq1}
\displaystyle\sum_{j=1}^\infty {(\ddd\left(B^k_j\right))}^{Q-\alpha} < 2^{-k} \:.
\end{equation}
If $B$ is a Carnot-Carath\'eodory ball in $\G$ with center $x$ and radius $r$, then for notational convenience, $2B$ will denote the ball with center $x$ and radius $2r$. Now define,
\begin{equation*}
f:= \displaystyle\sum_{k=1}^\infty \displaystyle\sum_{j=1}^\infty k \:{\left(\ddd\left(B^k_j\right)\right)}^{-\alpha} \: \chi_{2B^k_j} \:.
\end{equation*}
Then by the homogenity of the Haar measure (\ref{homogenity}) and the estimate (\ref{sharp_thm_eq1}),  
\begin{eqnarray*}
\int_{\G} f\:dm & \le & \displaystyle\sum_{k=1}^\infty \displaystyle\sum_{j=1}^\infty k \:{\left(\ddd\left(B^k_j\right)\right)}^{-\alpha} \: m\left(2B^k_j\right) \\
& \lesssim & \displaystyle\sum_{k=1}^\infty \displaystyle\sum_{j=1}^\infty k \:{\left(\ddd\left(B^k_j\right)\right)}^{Q-\alpha} \\
& < & \displaystyle\sum_{k=1}^\infty k \:2^{-k} < \infty \:.
\end{eqnarray*}
Hence $f\:dm$ defines a finite Radon measure on $\G$ and thus $\Gamma[f]$ defines a positive solution of the Heat equation on $\G \times (0,\infty)$. 

Now let $x \in E$ and $k \in \N$. Then there exists $j_k \in \N$ such that $x \in B^k_{j_k}$. If $r_k$ is the radius of $B^k_{j_k}$ then 
\begin{equation*}
B(x,r_k) \subset 2B^k_{j_k}\:.
\end{equation*}
Then by the positivity of $\Gamma$ and Lemma \ref{heat_indicator_estimate}, one has
\begin{equation} \label{sharp_thm_eq2}
\Gamma\left[\chi_{2B^k_{j_k}}\right](x,t) \ge \Gamma\left[\chi_{B(x,r_k)}\right](x,t) \ge 1/2 \:, 
\end{equation}
whenever  $t \le c^2_2 r^2_k$\:. Thus defining $t_k := c^2_2 r^2_k$, we have by the definition of $f$ and (\ref{sharp_thm_eq2}),
\begin{eqnarray*}
\Gamma[f](x,t_k)  & \ge &  k {\left(\ddd\left(B^k_{j_k}\right)\right)}^{-\alpha} \: \Gamma\left[\chi_{2B^k_{j_k}}\right](x,t_k) \\
& \ge & k \left(2^{-(\alpha + 1)} c^{\alpha}_2\right) t^{-\alpha/2}_k \:.
\end{eqnarray*}
So there exists a positive constant $c_5$, depending only on the choice of $\{X_1,\cdots,X_{N_1}\}$ and the parameter $\alpha$, such that
\begin{equation*}
t^{\alpha/2}_k \: \Gamma[f](x,t_k) \ge c_5\: k \:.
\end{equation*}
As $t_k = c^2_2 r^2_k$, by (\ref{sharp_thm_eq1}), $t_k \to 0$ as $k \to +\infty$, and hence $x \in S_\alpha(\Gamma[f])$. Therefore, $E \subset S_\alpha(\Gamma[f])$. This gives part (i) of the result.

\medskip

For part $(ii)$, we start off by noting that for $\alpha=0$, any positive constant solution of the Heat equation yields the result. We next choose and fix $\alpha \in (0,Q)$. As any Carnot-Carath\'eodory ball $B$ is a Borel subset of the complete, separable metric space $(\G,d)$ with Hausdorff dimension $Q$, by Lemma \ref{existence}, for each integer $j > 1/(Q-\alpha)$, there exists a compact subset $K_j \subset B$ with 
\begin{equation} \label{partii_eq1}
dim_{\mathcal{H}}\left(K_j\right)=Q-\alpha-\frac{1}{j}\:.
\end{equation}
We now consider
\begin{equation*}
K:= \bigcup_j K_j\:.
\end{equation*}
Then for $s \ge Q - \alpha$, by (\ref{partii_eq1}),
\begin{equation} \label{partii_eq2}
\mathcal{H}^s(K) \le \sum_j \mathcal{H}^s(K_j) =0\:.
\end{equation}
On the other hand, for $s < Q - \alpha$, there exists $j_0 \in \N$, such that
\begin{equation*}
s < Q-\alpha-\frac{1}{j_0}\:,
\end{equation*}
and hence by (\ref{partii_eq1}),
\begin{equation} \label{partii_eq3}
\mathcal{H}^s(K) \ge \mathcal{H}^s(K_{j_0}) = \infty\:.
\end{equation}
Thus by (\ref{partii_eq2}) and (\ref{partii_eq3}), it follows that
\begin{equation} \label{partii_eq4}
dim_{\mathcal{H}}\left(K\right)= Q- \alpha\:,\text{ with } \mathcal{H}^{Q-\alpha}(K)=0\:.
\end{equation}
Then by part $(i)$ of Theorem \ref{sharp_thm}, there exists a positive solution of the Heat equation $u$ on $\G \times (0,\infty)$, such that $K \subset S_\alpha(u)\:.$ As by definition, $S_\alpha(u) \subset T_\alpha(u)$, we then have $K \subset  T_\alpha(u)$\:. Thus combining (\ref{partii_eq4}) and Theorem \ref{Hausdorff_bound}, we get that
\begin{equation*}
Q- \alpha = dim_{\mathcal{H}}\left(K\right) \le dim_{\mathcal{H}}\left(T_\alpha(u)\right) \le Q- \alpha\:.  
\end{equation*}
This completes the proof of Theorem \ref{sharp_thm}.
\end{proof}
\section*{Acknowledgements}
The author is supported by a research fellowship of Indian Statistical Institute.

\bibliographystyle{amsplain}

\end{document}